\journal{\textsf{arXiv}, typeset with elsarticle.cls}
    \newcommand{\floor}[1]{\lfloor#1\rfloor}
    \newcommand{\norm}[1]{\| #1 \|}
    \newcommand{\RR}{\mathbb{R}}
    \newcommand{\EE}{\mathbb{E}}
    \newcommand{\ZZ}{\mathbb{Z}}
    \newcommand{\NN}{\mathbb{N}}
    \newcommand{\Exp}{\operatorname{E}}
    \newcommand{\E}{\Exp}
    \renewcommand{\Pr}{\operatorname{P}}
    \newcommand{\dto}{\xrightarrow{d}}
    \newcommand{\wto}{\xrightarrow{w}}
    \newcommand{\vto}{\xrightarrow{v}}
    \newcommand{\fidi}{\xrightarrow{\text{fidi}}}
    \newcommand{\eind}{\stackrel{d}{=}}
    \newcommand{\rmd}{\mathrm{d}}
    \newcommand{\be}{\begin{equation}}
    \newcommand{\ee}{\end{equation}}
    \newcommand{\som}{{\textstyle\sum}}
    \renewcommand{\le}{\leqslant}
    \renewcommand{\ge}{\geqslant}
    \renewcommand{\leq}{\le}
    \renewcommand{\geq}{\ge}
  \newtheorem{theorem}{Theorem}[section]
  \newtheorem{lemma}[theorem]{Lemma}
  \newdefinition{cond}[theorem]{Condition}
  \newdefinition{remark}[theorem]{Remark}
  \newdefinition{example}[theorem]{Example}
  \newproof{proof}{Proof}
\numberwithin{equation}{section}
\begin{document}

\begin{frontmatter}

\title{A Multivariate Functional Limit Theorem in Weak $M_{1}$ Topology}

\author[fn1]{Bojan Basrak}
\ead{bbasrak@math.hr}

\author[fn2]{Danijel Krizmani\'{c}}
\ead{dkrizmanic@math.uniri.hr}

\address[fn1]{University of Zagreb, Department of Mathematics, Bijeni\v{c}ka 30, 10000 Zagreb, Croatia}
\address[fn2]{University of Rijeka, Department of Mathematics, Radmile Matej\v{c}i\'{c} 2, 51000 Rijeka, Croatia}

\begin{abstract}

We show a new functional limit theorem for weakly dependent regularly varying sequences of random
vectors. As it turns out, the convergence takes place in the space of
 $\mathbb{R}^{d}$ valued c\`{a}dl\`{a}g functions
  endowed with the so-called weak $M_{1}$ topology.
 The  theory is illustrated on two examples. In particular, we demonstrate
why such an extension of Skorohod's $M_1$ topology
is actually necessary for the limit theorem to hold.
\end{abstract}

\begin{keyword}
Functional limit theorem \sep Regular variation \sep Stable L\'{e}vy process \sep Weak $M_{1}$ topology
\end{keyword}

\end{frontmatter}

\section{Introduction}
\label{intro}

Literature in theoretical probability and statistics abounds with studies of the limiting
behaviour of partials sums, mostly in the case of stationary sequences with so-called light tails.
On the other hand, many applied probabilistic models, in teletraffic and insurance modelling for instance, frequently produce distributions with heavy tails and even infinite variance. Regularly varying distributions underlying some of these models fit various data sets particularly well (see Embrechts et al.
\cite{Em97} for examples of financial/actuarial data fitting such a hypothesis).

We consider a stationary sequence of $\mathbb{R}^{d}$ valued random
vectors $(X_n)_{n\geq 1}$ and its accompanying sequence of partial
sums $S_n=X_1+\cdots +X_n,\ {n\geq 1}$. If the $X_{n}$ are i.i.d. and regularly varying with index $\alpha \in (0,2)$, then
\begin{equation}\label{e:first}
 \frac{S_{n}-b_{n}'}{a_{n}'} \dto S_{\alpha} \quad \textrm{in} \ \mathbb{R}^{d},
\end{equation}
for some sequences $a_{n}'>0$ and $b_{n}'$ and some non-degenerate $\alpha$--stable random vector $S_{\alpha}$, see Rva\v{c}eva~\cite{Rv62}
(the univariate result goes back to Gnedenko and Kolmogorov). Weakly dependent sequences,
satisfying  strong mixing condition for instance, can exhibit a very similar behavior.
From the large literature on this phenomenon we refer here to Durrett and Resnick~\cite{Durrett78}, Davis~\cite{Da83}, Denker and Jakubowski~\cite{DeJa89}, Avram and Taqqu~\cite{AvTa92}, Davis and Hsing~\cite{DaHs95}
and Bartkiewicz et al.~\cite{BaJaMiWi09} in the one-dimensional case, and Phillip~\cite{Ph80},\,\cite{Ph86}, Jakubowski and Kobus~\cite{JaKo89} and Davis and Mikosch~\cite{DaMi98} in the multi-dimensional case.

In this paper we are interested in the functional generalization of (\ref{e:first}).
For infinite variance i.i.d.
regularly varying sequences $(X_{n})$ in the one-dimensional case
functional limit theorem was established in Skorohod~\cite{Sk57}. A very
readable proof of this result in the multivariate case can be found in Resnick~\cite{Resnick07} using Skorohod's $J_{1}$ topology on $D([0,1], \mathbb{R}^{d})$.
Tyran-Kami\'{n}ska~\cite{Ty10} recently studied the problem
for a more general class of weakly dependent stationary sequences
using the same topology. However, this choice of topology excludes many processes used in applications.
To study such models we are forced to use a weaker topology.

Our main theorem extends
the main result in Basrak et al.~\cite{BKS} to the multivariate setting. In~\cite{BKS} a functional limit theorem has been obtained for stationary, regularly varying sequence of dependent random variables
for which clusters of high-threshold excesses can be broken down into
asymptotically independent blocks, using the Skorohod's $M_{1}$ topology. Direct generalization of this result to random vectors fails in standard $M_1$ topology, as illustrated by an example in Section~\ref{S:examples}.
It turns out that the limit theorem still holds but in  the weak $M_{1}$ topology.
This topology is strictly weaker than the standard $M_{1}$ topology on $D([0,1], \mathbb{R}^{d})$ for $d\geq 2$ (cf. Whitt~\cite{Whitt02}).
Our main result seems to be the first generic functional limit theorem which holds in
weak $M_1$
topology, but fails in other more frequently used topologies on $D([0,1], \mathbb{R}^{d})$.

\section{Assumptions}
\label{S:statpoint}

\subsection{Regular variation}
\label{SS:statpoint:tail}

Denote $\EE=[-\infty, \infty]^{d} \setminus \{ 0 \}$. The space
$\EE$ is equipped with the topology in which a set $B \subset \EE$
has compact closure if and only if it is bounded away from zero,
that is, if there exists $u > 0$ such that $B \subset \EE_u = \{ x
\in \EE : \|x\| >u \}$. Denote by $C_{K}^{+}(\EE)$ the class of all
nonnegative, continuous functions on $\EE$ with compact support.

We say that a strictly stationary process $(X_{n})_{n \in
\mathbb{Z}}$ is \emph{(jointly) regularly varying} with index
$\alpha \in (0,\infty)$ if for any nonnegative integer $k$ the
$kd$-dimensional random vector $X = (X_{1}, \ldots , X_{k})$ is
multivariate regularly varying with index $\alpha$, i.e.\ there
exists a random vector $\Theta$ on the unit sphere
$\mathbb{S}^{kd-1} = \{ x \in \mathbb{R}^{kd} : \|x\|=1 \}$ such
that for every $u \in (0,\infty)$ and as $x \to \infty$,
 \begin{equation}\label{e:regvar1}
   \frac{\Pr(\|X\| > ux,\,X / \| X \| \in \cdot \, )}{\Pr(\| X \| >x)}
    \wto u^{-\alpha} \Pr( \Theta \in \cdot \,),
 \end{equation}
the arrow ``$\wto$'' denoting weak convergence of finite measures.

Theorem~2.1 in Basrak and Segers \cite{BaSe} provides a convenient
characterization of joint regular variation: it is necessary and
sufficient that there exists a process $(Y_n)_{n \in \mathbb{Z}}$
with $\Pr(\|Y_0\| > y) = y^{-\alpha}$ for $y \ge 1$ such that as $x
\to \infty$,
\begin{equation}\label{e:tailprocess}
  \bigl( (x^{-1}\ X_n)_{n \in \ZZ} \, \big| \, \|X_0\| > x \bigr)
  \fidi (Y_n)_{n \in \ZZ},
\end{equation}
where ``$\fidi$'' denotes convergence of finite-dimensional
distributions. The process $(Y_{n})_{n \in \mathbb{Z}}$ is called
the \emph{tail process} of $(X_{n})_{n \in \mathbb{Z}}$. Writing
$\Theta_n = Y_n / \|Y_0\|$ for $n \in \ZZ$, we also have
\begin{equation}\label{e:spectailprocess}
  \bigl( (\|X_0\|^{-1}X_n)_{n \in \ZZ} \, \big| \, \|X_0\| > x \bigr)
  \fidi (\Theta_n)_{n \in \ZZ},
\end{equation}
see Corollary 3.2 in \cite{BaSe}. The process $(\Theta_n)_{n \in
\ZZ}$ is independent of $\|Y_0\|$ and is called the \emph{spectral
(tail) process} of $(X_n)_{n \in \ZZ}$. The law of $\Theta_0 = Y_0 /
\|Y_0\| \in \mathbb{S}^{d-1}$ is the spectral measure of the common
distribution of the random vectors $X_i$. Regular variation of this
distribution can be expressed in terms of vague convergence of
measures on $\EE$ as follows: for $a_n$ as such that
\begin{equation}\label{e:niz}
 n \Pr( \|X_{1}\| > a_{n}) \to 1,
\end{equation}
 as $n \to
\infty$,
\begin{equation}
  \label{e:onedimregvar}
  n \Pr( a_n^{-1} X_i \in \cdot \, ) \vto \mu( \, \cdot \,),
\end{equation}
where the limit $\mu$ is a nonzero Radon measure on $\EE$ that
satisfies $\mu([-\infty,\infty]^{d} \setminus \mathbb{R}^{d})=0$.
Further, the measure $\mu$ satisfies the following scaling property
\begin{equation}
 \label{e:scalingprop}
 \mu(u\,\cdot\,) = u^{-\alpha} \mu (\,\cdot\,),
 \end{equation}
 for every $u>0$, where $\alpha$ is the same sa in relation
 (\ref{e:regvar1}).

\subsection{Point processes and dependence conditions}
\label{SS:statpoint:point}

We define the time-space point processes
\begin{equation}
\label{E:ppspacetime}
  N_{n} = \sum_{i=1}^{n} \delta_{(i / n,\,X_{i} / a_{n})} \qquad \text{ for all $n\in\NN$,}
\end{equation}
with $a_n$ as in \eqref{e:niz}. In this section we find a limit in
distribution for the sequence $(N_n)_n$ in the state space $[0, 1]
\times \EE_u$ for $u > 0$ under appropriate dependence
assumptions. The limit process is a Poisson
superposition of cluster processes, whose distribution is determined
by the law of the tail process $(Y_i)_{i \in \ZZ}$.

To control the dependence in the sequence $(X_n)_{n \in \mathbb{Z}}$
we first have to assume that clusters of large values of $\|X_{n}\|$
have finite mean size, roughly speaking.

\begin{cond}
\label{c:anticluster} There exists a positive integer sequence
$(r_{n})_{n \in \mathbb{N}}$ such that $r_{n} \to \infty $ and
$r_{n} / n \to 0$ as $n \to \infty$ and such that for every $u > 0$,
\begin{equation}
\label{e:anticluster}
  \lim_{m \to \infty} \limsup_{n \to \infty}
  \Pr \biggl( \max_{m \le |i| \le r_{n}} \|X_{i}\| > ua_{n}\,\bigg|\,\|X_{0}\|>ua_{n} \biggr) = 0.
\end{equation}
\end{cond}

Put $M_{1,n} = \max \{ \|X_{i}\| : i=1, \ldots , n \}$ for $n \in
\NN$. In Proposition~4.2 in \cite{BaSe}, it has been shown that
under Condition~\ref{c:anticluster} the following
holds
\begin{eqnarray}
   \theta  &=& \lim_{r \to \infty} \lim_{x \to \infty} \Pr \bigl(M_{1,r} \le x \, \big| \, \|X_{0}\|>x \bigr)\nonumber \\
   & =& \Pr ({\textstyle\sup_{i\ge 1}} \|Y_{i}\| \le 1) = \Pr ({\textstyle\sup_{i\le -1}} \|Y_{i}\| \le 1)>0. \label{E:theta:spectral}
\end{eqnarray}
 By Remark~4.7 in \cite{BaSe}, alternative
expressions for $\theta$ in \eqref{E:theta:spectral} are
\begin{eqnarray*}
    \theta
    &=& \int_1^\infty
    \Pr \biggl( \sup_{i \geq 1} \norm{\Theta_i}^\alpha \leq y^{-\alpha} \biggr)
    \, \rmd(-y^{-\alpha}) \\
    &=& \E \biggl[ \max \biggl( 1 - \sup_{i \geq 1} \norm{\Theta_i}^\alpha, 0 \biggr) \biggr]
    = \E \biggl[ \sup_{i \geq 0} \norm{\Theta_i}^\alpha
    - \sup_{i \geq 1} \norm{\Theta_i}^\alpha \biggr].
\end{eqnarray*}
Moreover we have $\Pr( \lim_{|n| \to \infty} \norm{Y_n} =
0 ) = 1$. Also, for every $u \in (0, \infty)$
\begin{equation}
\label{E:runsblocks}
    \Pr(M_{1,r_n} \leq a_n u \mid \norm{X_0} > a_n u)
    = \frac{\Pr(M_{1,r_n} > a_n u)}{r_n \Pr(\norm{X_0} > a_n u)} + o(1)
    \to \theta
\end{equation}
as $n \to \infty$.

In the sequel, the  point processes
\[
  \sum_{i=1}^{r_n} \delta_{(a_n u)^{-1} X_i} \quad \text{conditionally on} \quad M_{1,r_n} > a_n u\,,
\]
are called \emph{cluster processes}. We use them to describe a cluster of
extremes occurring in a relatively short time
span. Theorem~4.3 in \cite{BaSe} yields the weak convergence of the
sequence of cluster processes in the state space $\EE$:
\begin{equation}
\label{E:clusterprocess}
    \biggl( \sum_{i=1}^{r_n} \delta_{(a_n u)^{-1} X_i} \, \bigg| \, M_{1,r_n} > a_n u \biggr)
    \dto \biggl( \sum_{j \in \mathbb{Z}} \delta_{Y_j} \, \bigg| \, \sup_{i \le -1} \|Y_i\| \le 1 \biggr).
\end{equation}

Note that since $\|Y_n\| \to 0$ almost surely as $|n| \to \infty$,
the point process $\sum_n \delta_{Y_n}$ is well-defined in $\EE$. By
\eqref{E:theta:spectral}, the probability of the conditioning event
on the right-hand side of \eqref{E:clusterprocess} is nonzero.

To establish convergence of $N_n$ in \eqref{E:ppspacetime}, we need
to impose a certain mixing condition called $\mathcal{A}'(a_n)$
which is slightly stronger than the condition~$\mathcal{A}(a_n)$
introduced in Davis and Hsing~\cite{DaHs95} and Davis and
Mikosch~\cite{DaMi98}.

\begin{cond}[$\mathcal{A}'(a_{n})$]
\label{c:mixcond} There exists a sequence of positive integers
$(r_{n})_{n}$ such that $r_{n} \to \infty $ and $r_{n} / n \to 0$ as
$n \to \infty$ and such that for every $f \in C_{K}^{+}([0,1] \times
\EE)$, denoting $k_{n} = \lfloor n / r_{n} \rfloor$, as $n \to
\infty$,
\begin{equation}\label{e:mixcon}
 \Exp \biggl[ \exp \biggl\{ - \sum_{i=1}^{n} f \biggl(\frac{i}{n}, \frac{X_{i}}{a_{n}}
 \biggr) \biggr\} \biggr]
 - \prod_{k=1}^{k_{n}} \Exp \biggl[ \exp \biggl\{ - \sum_{i=1}^{r_{n}} f \biggl(\frac{kr_{n}}{n}, \frac{X_{i}}{a_{n}} \biggr) \biggr\} \biggr] \to 0.
\end{equation}
\end{cond}

It can be shown that Condition~\ref{c:mixcond} is implied by the
strong mixing property, see Krizmani\'c~\cite{Kr10}.

The  proof of Theorem 2.3 in  Basrak et al.~\cite{BKS}
carries over to the multivariate case with some straightforward adjustments.
Hence, we obtain the following result describing exceedences in the sequence $(X_n)$
outside of the ball of radius $u$ around the origin.

\begin{theorem}
\label{T:pointprocess:complete} Assume that Conditions~\ref{c:anticluster}
and \ref{c:mixcond} hold
for the same sequence $(r_n)$, then for every $u \in (0, \infty)$ and as
$n \to \infty$,
\[
    N_n \bigg|_{[0, 1] \times \EE_u}\, \dto N^{(u)}
    = \sum_i \sum_j \delta_{(T^{(u)}_i, u Z_{ij})} \bigg|_{[0, 1] \times \EE_u}\,,
\]
in $[0, 1] \times \EE_u$ and
\begin{enumerate}
\item $\sum_i \delta_{T^{(u)}_i}$ is a homogeneous Poisson process on $[0, 1]$ with intensity $\theta u^{-\alpha}$,
\item $(\sum_j \delta_{Z_{ij}})_i$ is an i.i.d.\ sequence of point processes in $\EE$, independent of $\sum_i \delta_{T^{(u)}_i}$, and with common distribution equal to the weak limit in \eqref{E:clusterprocess}.
\end{enumerate}
\end{theorem}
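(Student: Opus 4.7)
The plan is to establish convergence at the level of Laplace functionals. Since vague convergence of point processes on the Polish space $[0,1]\times\EE_u$ is equivalent to convergence of Laplace functionals on nonnegative continuous functions with compact support, it suffices to prove, for every $f\in C_K^+([0,1]\times\EE_u)$,
\[
  \Psi_n(f) := \E\Bigl[\exp\Bigl\{-\sum_{i=1}^n f\bigl(i/n,X_i/a_n\bigr)\Bigr\}\Bigr]
  \;\longrightarrow\; \E[\exp(-N^{(u)}(f))].
\]
Note that because $f$ is supported in $[0,1]\times\EE_u$, only summands with $\|X_i\|>ua_n$ contribute, which is the reason the conclusion only concerns the restriction to $\EE_u$.

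First I would apply the mixing Condition~\ref{c:mixcond} to replace $\Psi_n(f)$ by the product $\prod_{k=1}^{k_n}\E[\exp\{-\sum_{i=1}^{r_n}f(kr_n/n,X_i/a_n)\}]$ up to a $o(1)$ error; this reduces the problem to asymptotically independent $r_n$-blocks. Writing $\Phi_{n,k}:=1-\E[\exp\{-\sum_{i=1}^{r_n}f(kr_n/n,X_i/a_n)\}]$, each factor has the form $1-\Phi_{n,k}$ and, thanks to $\|f\|_\infty<\infty$ and the support of $f$, we have $\Phi_{n,k}\le C\,\Pr(M_{1,r_n}>ua_n)=O(r_n/n)$, so uniformly in $k$ we may apply the $\log(1-x)=-x+O(x^2)$ expansion and it is enough to prove
\[
  \sum_{k=1}^{k_n}\Phi_{n,k}\;\longrightarrow\;
  \int_0^1\theta u^{-\alpha}\,\bigl(1-\E[\exp(-\Lambda_t(f))]\bigr)\,\d t,
\]
where $\Lambda_t(f)=\sum_j f(t,uZ_{1j})$ denotes the cluster part at time~$t$.

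The key identity is to condition on $\{M_{1,r_n}>ua_n\}$ inside each block, writing
\[
  \Phi_{n,k}=\Pr(M_{1,r_n}>ua_n)\,
  \E\Bigl[1-\exp\Bigl\{-\sum_{i=1}^{r_n}f\bigl(kr_n/n,\,u\cdot X_i/(a_n u)\bigr)\Bigr\}\;\Big|\;M_{1,r_n}>ua_n\Bigr].
\]
Now \eqref{E:runsblocks} combined with \eqref{e:niz} and \eqref{e:onedimregvar} gives $k_n\Pr(M_{1,r_n}>ua_n)\to\theta u^{-\alpha}$. Meanwhile the cluster convergence \eqref{E:clusterprocess} applied to the continuous bounded functional $\mu\mapsto 1-\exp(-\int f(t,u\,\cdot)\,\d\mu)$ (with $t$ frozen) shows that the conditional expectation above converges, uniformly in $t\in[0,1]$, to $1-\E[\exp\{-\sum_j f(t,uZ_j)\}]$ where $(Z_j)$ has the limit cluster law. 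A Riemann-sum argument in the time variable (with $t_k=kr_n/n$) then produces the integral over $[0,1]$, identifying the limit as the Laplace functional of the compound Poisson process described in the statement.

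The only genuinely new step compared to the one-dimensional argument in~\cite{BKS} is verifying that the functional $\mu\mapsto 1-\exp(-\int f\,\d\mu)$ is a bounded continuous map on the space of point measures on $[0,1]\times\EE_u$ in the relevant vague topology, and that \eqref{E:clusterprocess} of \cite{BaSe} still applies when $\EE$ is the $d$-dimensional deleted cube; these follow because $f$ has support bounded away from $0$ and the Radon measure $\mu$ of~\eqref{e:onedimregvar} charges no mass at $\infty$ by~\eqref{e:scalingprop}. I expect the main technical obstacle to be the uniformity of the block asymptotics in the time index $k$, needed to turn the sum $\sum_k\Phi_{n,k}$ into a Riemann integral; this is handled, as in \cite{BKS}, by the uniform continuity of $f$ in its first coordinate and the fact that $r_n/n\to 0$.
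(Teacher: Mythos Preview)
Your proposal is correct and follows exactly the Laplace-functional/block-decomposition strategy of Theorem~2.3 in \cite{BKS}, which is precisely what the paper invokes: it does not give a self-contained proof but states that the argument in \cite{BKS} ``carries over to the multivariate case with some straightforward adjustments.'' Your outline spells out those adjustments accurately, including the use of \eqref{E:runsblocks} for the block-maximum asymptotics and \eqref{E:clusterprocess} for the cluster limit, so there is nothing to add.
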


\section{Functional limit theorem}
\label{S:flt}

In the main result in the article, we show the convergence of the partial sum
process $V_n$ to a stable L\'evy process in the space
$D([0, 1], \mathbb{R}^{d})$ equipped with Skorohod's weak $M_1$
topology. As in the one dimensional case (cf. Basrak et al.~\cite{BKS}) we first represent the partial sum process $V_n$ as
the image of the time-space point process $N_n$ in
\eqref{E:ppspacetime} under a certain summation functional. Then, since this
summation functional enjoys the right continuity properties, by an application of the continuous mapping theorem we transfer
the weak convergence of $N_n$ in
Theorem~\ref{T:pointprocess:complete} to weak convergence
of $V_n$.

\subsection{The weak $M_1$ topology}
\label{SS:M1}

For $a=(a^{1}, \ldots, a^{d}), b=(b^{1}, \ldots, b^{d}) \in
\mathbb{R}^{d}$, let $[[a,b]]$ be the product segment, i.e.\
$$[[a,b]]=[a^{1},b^{1}] \times [a^{2},b^{2}]
\times \dots \times [a^{d},b^{d}].$$
 For $x \in D([0,1],
\mathbb{R}^{d})$ the \emph{completed graph} of $x$ is the set
\[
  G_{x}
  = \{ (t,z) \in [0,1] \times \mathbb{R}^{d} : z \in [[x(t-), x(t)]]\},
\]
where $x(t-)$ is the left limit of $x$ at $t$. We define an
\emph{order} on the graph $G_{x}$ by saying that $(t_{1},z_{1}) \le
(t_{2},z_{2})$ if either (i) $t_{1} < t_{2}$ or (ii) $t_{1} = t_{2}$
and $|x^{j}(t_{1}-) - z^{j}_{1}| \le |x^{j}(t_{2}-) - z^{j}_{2}|$
for all $j=1,\ldots,d$. Clearly, the relation $\leqslant$ induces only a partial
order on the graph $G_{x}$. A \emph{weak parametric representation}
of the graph $G_{x}$ is a continuous nondecreasing function $(r,u)$
mapping $[0,1]$ into $G_{x}$, with $r \in C([0,1],[0,1])$ being the
time component and $u=(u^{1},\ldots, u^{d}) \in C([0,1],
\mathbb{R}^{d})$ being the spatial component, such that $r(0)=0,
r(1)=1$ and $u(1)=x(1)$. Let $\Pi_{w}(x)$ denote the set of weak
parametric representations of the graph $G_{x}$. For $x_{1},x_{2}
\in D([0,1], \mathbb{R}^{d})$ define
\[
  d_{w}(x_{1},x_{2})
  = \inf \{ \|r_{1}-r_{2}\|_{[0,1]} \vee \|u_{1}-u_{2}\|_{[0,1]} : (r_{i},u_{i}) \in \Pi_{w}(x_{i}), i=1,2 \},
\]
where $\|x\|_{[0,1]} = \sup \{ \|x(t)\| : t \in [0,1] \}$. Now we
say that $x_{n} \to x$ in $D([0,1], \mathbb{R}^{d})$ for a sequence
$(x_{n})$ in the weak Skorohod's $M_{1}$ (or shortly $WM_{1}$)
topology if $d_{w}(x_{n},x)\to 0$ as $n \to \infty$. The $WM_{1}$
topology is weaker than the standard $M_{1}$ topology on $D([0,1],
\mathbb{R}^{d})$. Note, however that for $d=1$ two topologies coincide. The $WM_{1}$ topology
coincides with the topology induced by the metric
$$ d_{p}(x_{1},x_{2})=\max \{ d_{M_{1}}(x_{1}^{j},x_{2}^{j}) :
j=1,\ldots,d\}$$
 for $x_{i}=(x_{i}^{1}, \ldots, x_{i}^{d}) \in D([0,1],
 \mathbb{R}^{d})$ and $i=1,2$ (here $d_{M_{1}}$ denotes the standard Skorohod's
 $M_{1}$ metric on $D([0,1],\mathbb{R})$). The metric $d_{p}$ induces the product topology on $D([0,1], \mathbb{R}^{d})$.
For detailed discussion of the weak $M_{1}$ topology we refer to
Whitt~\cite{Whitt02}.

\begin{figure}\label{Slika:graf}
\centering
\includegraphics[width=12cm]{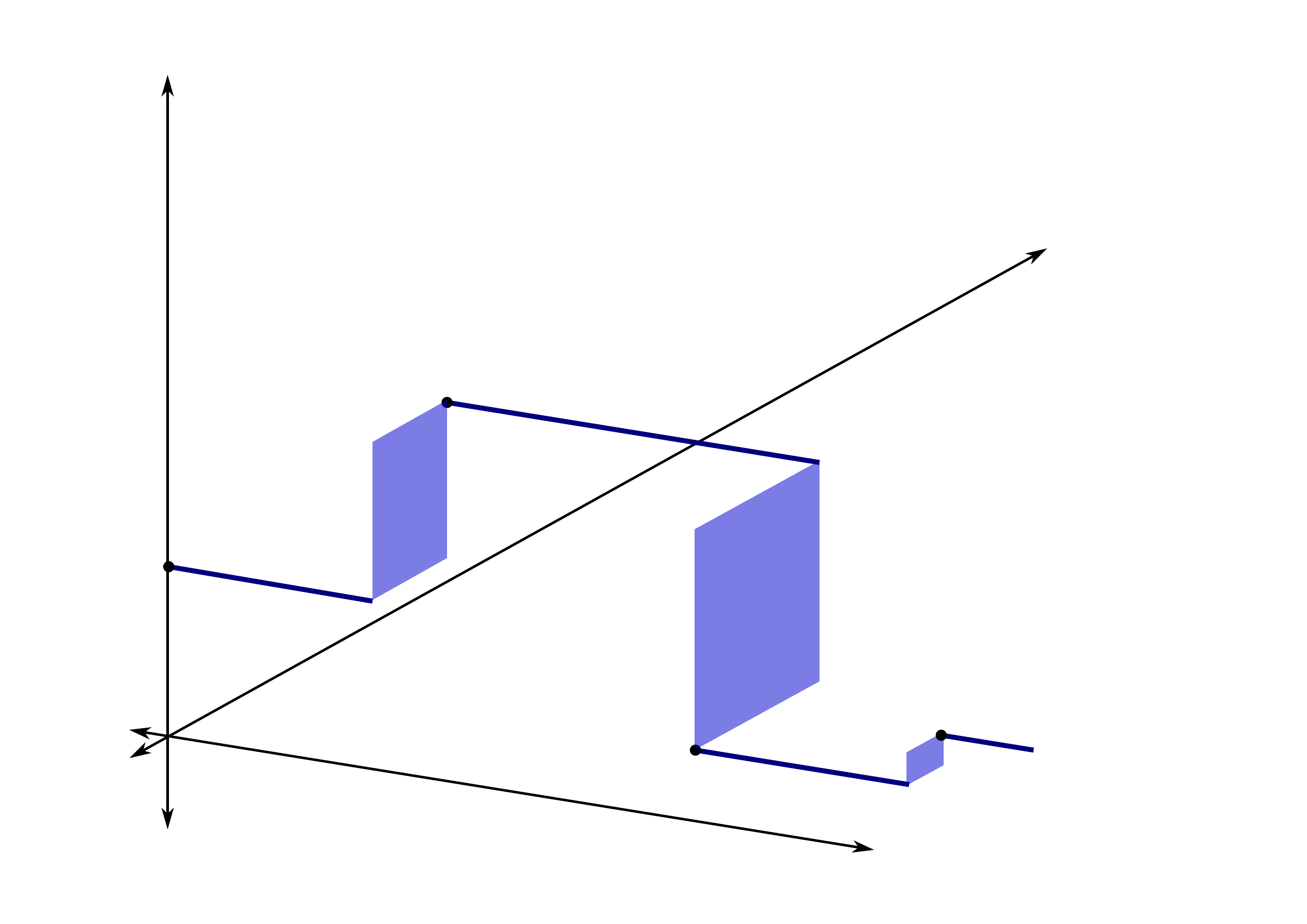}\\
\caption{Completed graph of a c\'adl\'ag function in $\RR^2$. }
\end{figure}

\subsection{Continuity of summation functional}
\label{SS:sumfunct}

Fix $0 < v < u < \infty$. The proof of our main theorem depends on
the continuity properties of the summation functional
\[
  \psi^{(u)} \colon \mathbf{M}_{p}([0,1] \times \EE_{v}) \to
  D([0,1], \mathbb{R}^{d})
\]
defined by
\[
  \psi^{(u)} \bigl( \som_{i}\delta_{(t_{i},\,(x_{i}^{1},\ldots,x_{i}^{d}))} \bigr) (t)
  = \Big( \sum_{t_{i} \le t} x_{i}^{j} \,1_{\{u < |x_i^{j}| < \infty\}} \Big)_{j=1,\ldots,d}, \qquad t \in [0, 1].
\]
Observe that $\psi^{(u)}$ is well defined because $[0,1] \times
\EE_{u}$ is a relatively compact subset of $[0,1] \times \EE_{v}$.
The space $\mathbf{M}_p$ of Radon point measures is equipped with
the vague topology and $D([0, 1], \mathbb{R}^{d})$ is equipped with
the weak $M_1$ topology.

We will show that $\psi^{(u)}$ is continuous on the set $\Lambda =
\Lambda_{1} \cap \Lambda_{2}$, where
\begin{multline*}
  \Lambda_{1} =
  \{ \eta \in \mathbf{M}_{p}([0,1] \times \EE_{v}) :
    \eta ( \{0,1 \} \times \EE_{u}) = 0 \ \textrm{and} \\[0.3em]
     \eta ([0,1] \times \{ x=(x^{1},\ldots,x^{d}) : |x^{i}| \in \{u, \infty \} \ \textrm{for some} \ i \}) =0 \}, \\[1em]
  \shoveleft \Lambda_{2} =
  \{ \eta \in \mathbf{M}_{p}([0,1] \times \EE_{v}) :
     \text{for all $t \in [0,1]$}, \
    \eta ( \{ t \} \times \prod_{j=1}^{d}A_{j}) = 0
   \ \textrm{for all except} \\[0.3em] \textrm{at most one set} \ \prod_{j=1}^{d}A_{j} \ \textrm{where} \ A_{j}=(0,+\infty]
   \ \textrm{or} \ A_{j}=[-\infty,0)
   \}.
\end{multline*}

\begin{lemma}
\label{l:prob1} Assume that with probability one, the tail process
$(Y_{i})_{i \in \mathbb{Z}}$ in \eqref{e:tailprocess} satisfies the
property that for every $j=1,\ldots,d$, $(Y_{i}^{j})_{i \in
\mathbb{Z}}$ has no two values of the opposite sign. Then $ \Pr (
N^{(v)} \in \Lambda ) = 1$.
\end{lemma}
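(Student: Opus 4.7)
The plan is to use Theorem~\ref{T:pointprocess:complete}, which gives the explicit representation $N^{(v)} = \sum_i \sum_j \delta_{(T^{(v)}_i,\, vZ_{ij})}$, and to verify separately that $\Pr(N^{(v)} \in \Lambda_\ell) = 1$ for $\ell = 1, 2$. Throughout, I will use that the cluster processes $(\sum_j \delta_{Z_{ij}})_i$ are i.i.d.\ with common law given by the conditional law in (\ref{E:clusterprocess}), independent of the homogeneous Poisson process $\sum_i \delta_{T^{(v)}_i}$.

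For $\Lambda_1$: The Poisson process almost surely assigns no point to the finite set $\{0,1\}$, handling the first defining condition. For the second, it suffices to prove that for each coordinate $k \in \{1,\ldots,d\}$ and each $c > 0$,
$$
 \Pr\Bigl(|Y_n^k| \in \{c, \infty\} \text{ for some } n \in \mathbb{Z}\Bigr) = 0,
$$
since the conditioning event defining the cluster law has positive probability $\theta$ by (\ref{E:theta:spectral}), so unconditional null sets remain null after conditioning. Finiteness of $|Y_n^k|$ is immediate from $\|Y_n\| = \|Y_0\|\|\Theta_n\| < \infty$ a.s. For the atomless part, I would condition on $\Theta_n^k$ and use that $\|Y_0\|$ is Pareto (hence atomless) and independent of $(\Theta_n)$ to conclude $\Pr(\|Y_0\||\Theta_n^k| = c) = 0$ for each $n$; summing over the countable index set $\mathbb{Z}$ closes the step. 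Applying this to each cluster with $c = u/v$ and taking a countable union over $i$ delivers the second defining condition of $\Lambda_1$.

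For $\Lambda_2$: The Poisson times $T^{(v)}_i$ are pairwise distinct a.s., so any two points of $N^{(v)}$ sharing a time component come from the same cluster $i$. By hypothesis, for every $k$ the sequence $(Y_n^k)_n$ has no two values of opposite sign almost surely; this property passes to each $(Z_{ij}^k)_j$ by the same positive-probability-conditioning remark as above. Hence within cluster $i$ there is a (random) sign pattern $\sigma \in \{+,-\}^d$ such that every $vZ_{ij}$ either has some vanishing coordinate or lies in the single orthant $\prod_k A_k$ with $A_k \in \{(0,\infty], [-\infty,0)\}$ determined by $\sigma$. Since points with a zero coordinate lie in none of the products $\prod_k A_k$ appearing in the definition of $\Lambda_2$, the mass $\eta(\{t\} \times \prod_k A_k)$ is nonzero for at most one choice of $(A_k)_k$. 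The only delicate point in the whole argument is the routing of the unconditional atomless and sign-consistency properties of $(Y_n)$ through the conditioning event $\{\sup_{i\le -1}\|Y_i\|\le 1\}$; this is essentially bookkeeping thanks to its strictly positive probability.
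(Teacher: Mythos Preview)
Your proof is correct and follows essentially the same approach as the paper: for $\Lambda_1$ you exploit the decomposition $Y_n^k=\|Y_0\|\,\Theta_n^k$ with $\|Y_0\|$ Pareto and independent of the spectral process to rule out atoms at level $u/v$, then pass through the conditioning event of positive probability $\theta$; for $\Lambda_2$ you use a.s.\ distinctness of Poisson times together with the sign-consistency hypothesis on $(Y_i^j)_i$ within each cluster. Your write-up is in fact somewhat more explicit than the paper's (e.g.\ noting that points with a vanishing coordinate lie in none of the open orthants, and spelling out why a countable union over $n$ and over clusters preserves the null set), but the underlying argument is the same.
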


\begin{proof}
Recall that we can write $Y_{i}^{j} = \|Y_0\|\,\Theta_i^{j}$ for all indices $i$ and coordinates $j$, see \eqref{e:spectailprocess}.
Observe that  random variable $ \|Y_0\|$  is continuous and independent of all
$\Theta_i^{j}$'s to obtain
$$ \Pr( |Y_{i}^{j}|=u/v \ \textrm{for some} \ j=1,\ldots,d )=0.$$

Therefore, in light of Theorem
\ref{T:pointprocess:complete} it holds that
\begin{eqnarray*}
  \Pr \Big( \sum_{j}\delta_{vZ_{ij}} (\{x=(x^{1},\ldots,x^{d}) : |x^{k}|=u \ \textrm{for some} \ k\})=0 \Big) &&
  \\[0.2em]
   &\hspace*{-52em} =& \hspace*{-25.3em} \Pr \Big( \sum_{j}\delta_{Z_{ij}} (\{x: |x^{k}|=u/v \ \textrm{for some} \
   k\} \Big)=0) \\[0.2em]
   &\hspace*{-52em} =& \hspace*{-25.3em} \Pr \Big( \sum_{j}\delta_{Y_{j}} (\{x=(x^{1},\ldots,x^{d}) :
    |x^{k}|=u/v \ \textrm{for some} \ k\})=0\,\Big|\,\sup_{i}\|Y_{i}\| \leqslant -1
    \Big)\\[0.2em]
    &\hspace*{-52em} =& \hspace*{-25.3em} 1.
\end{eqnarray*}
Hence
$$ \Pr (N^{v} ([0,1] \times \{ x=(x^{1},\ldots,x^{d}) : |x^{k}|=u \
\textrm{for some} \ k \})=0)=1.$$ We obtain the same result if above
we replace $u$ by $+\infty$, and this together with the fact that
$\Pr( \sum_{i}\delta_{T_{i}^{(v)}} (\{0,1\}) = 0 )=1$ implies $\Pr(
N^{(v)} \in \Lambda_{1})=1$.

Second, the assumption that with probability one $(Y_{i}^{j})_{i \in
\mathbb{Z}}$ has no two values of the opposite sign for every
$j=1,\ldots,d$ yields $\Pr(N^{(v)} \in \Lambda_{2})=1$.\qed
\end{proof}

\begin{lemma}
\label{L:contsf} The summation functional $\psi^{(u)} \colon
\mathbf{M}_{p}([0,1] \times \EE_{v}) \to D([0,1], \mathbb{R}^{d})$
is continuous on the set $\Lambda$, when $D([0,1], \mathbb{R}^{d})$
is endowed with Skorohod's weak $M_{1}$ topology.
\end{lemma}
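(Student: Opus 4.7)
My plan is to reduce the multivariate statement to the one-dimensional continuity result underlying the proof in \cite{BKS}. The decisive fact, recalled in Section~\ref{SS:M1}, is that the weak $M_1$ metric $d_w$ on $D([0,1], \RR^d)$ is equivalent to the product metric $d_p(x_1, x_2) = \max_j d_{M_1}(x_1^j, x_2^j)$. Consequently, to show continuity of $\psi^{(u)}$ at $\eta \in \Lambda$, it suffices to prove, for each coordinate $j = 1, \ldots, d$, that the real-valued functional
\[
\psi^{(u),j}(\eta)(t) = \sum_{t_i \le t} x_i^j \, \mathbf{1}_{\{u < |x_i^j| < \infty\}}
\]
is continuous at $\eta$ when $D([0,1], \RR)$ carries the standard $M_1$ topology.

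For a fixed coordinate $j$, the value of $\psi^{(u),j}(\eta)$ depends on $\eta$ only through the time coordinates $t_i$ and the $j$-th spatial coordinates $x_i^j$ of its atoms. Condition $\Lambda_1$ supplies precisely the continuity-set hypotheses needed for the one-dimensional argument: $\eta$ charges neither the temporal boundary $\{0,1\} \times \EE_u$ nor the threshold hyperplanes $\{|x^i| = u\}$ or $\{|x^i| = \infty\}$ for any coordinate $i$. Vague convergence $\eta_n \to \eta$ then implies that, for $n$ large, the atoms of $\eta_n$ contributing to $\psi^{(u),j}$ are in one-to-one correspondence with the corresponding atoms of $\eta$, with joint convergence of their times and of their $j$-th coordinates. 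In the absence of common-time clusters, $M_1$-convergence in coordinate $j$ then follows directly from the one-dimensional summation continuity established in \cite{BKS}.

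The main obstacle is the case when several atoms of $\eta$ share a common time $t$. Then $\psi^{(u),j}(\eta)$ has a jump at $t$ equal to the sum of the $j$-th coordinates of the contributing atoms, and for $M_1$ convergence in coordinate $j$ this jump must be interpolable monotonically between the left limit and the post-jump value; equivalently, the contributing summands must share the same sign in coordinate $j$. This is exactly what $\Lambda_2$ enforces: at every time $t$, all atoms of $\eta$ lie in a single sign orthant, so the partial sums in each coordinate are monotone regardless of how the atoms are ordered. The proof is then completed by constructing piecewise-linear weak parametric representations $(r_n, u_n)$ of $\psi^{(u)}(\eta_n)$ that traverse the matched atoms in a common order, and by checking that, on the joint convergence of their positions, these representations sit uniformly close to a fixed representation of $\psi^{(u)}(\eta)$. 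This bookkeeping is the technical heart of the argument, and $\Lambda_2$ is exactly what allows a single ordering to work consistently across all $d$ coordinates at once, so that the one-dimensional continuity from \cite{BKS} can be invoked coordinate-by-coordinate.
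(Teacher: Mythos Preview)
Your approach is essentially the same as the paper's: reduce $WM_1$ convergence to the product metric $d_p$ (the paper cites Theorem~12.5.2 in Whitt~\cite{Whitt02} for this equivalence) and then verify $d_{M_1}(\psi^{(u),j}(\eta_n),\psi^{(u),j}(\eta))\to 0$ for each coordinate $j$ by appealing, with minor modifications, to the one-dimensional argument of Lemma~3.2 in \cite{BKS}. Your discussion of the roles of $\Lambda_1$ and $\Lambda_2$ is accurate and more explicit than the paper, which simply defers to \cite{BKS}.

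One small remark: your final paragraph overshoots. Once you have committed to the $d_p$ reduction, there is no need to construct a single multivariate weak parametric representation ``that works consistently across all $d$ coordinates at once.'' Each coordinate $j$ gets its own one-dimensional parametric representation, chosen independently, and $\Lambda_2$ is used only to guarantee that, \emph{within coordinate $j$}, the summands at a common time share a sign (so that the BKS $M_1$ argument applies). The fact that $\Lambda_2$ happens to place all common-time atoms in a single orthant is stronger than what the coordinate-wise argument needs, but you do not have to exploit that extra strength. Dropping the ``common ordering across coordinates'' language would sharpen the write-up without changing its substance.
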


\begin{proof}
Suppose that $\eta_{n} \vto \eta$ in $\mathbf{M}_p([0,1] \times
\EE_{v})$ for some $\eta \in \Lambda$. We need to show that
$\psi^{(u)}(\eta_n) \to \psi^{(u)}(\eta)$ in $D([0, 1],
\mathbb{R}^{d})$ according to the $WM_1$ topology. By
Theorem~12.5.2 in Whitt~\cite{Whitt02}, it suffices to prove that,
as $n \to \infty$,
$$ d_{p}(\psi^{(u)}(\eta_{n}), \psi^{(u)}(\eta)) =
\max_{j=1,\ldots,d}d_{M_{1}}(\psi^{(u)\,j}(\eta_{n}),
\psi^{(u)\,j}(\eta)) \to 0,$$
 where $\psi^{(u)}(\xi)=(\psi^{(u)\,j}(\xi))_{j=1,\ldots,d}$ for
 $\xi \in \mathbf{M}_{p}([0,1] \times \EE_{v})$.

 Now one can follow, with small modifications, the lines in the proof of Lemma~3.2 in Basrak et al.~\cite{BKS} to obtain
$d_{M_{1}}(\psi^{(u)\,j}(\eta_{n}), \psi^{(u)\,j}(\eta)) \to 0$ as
$n \to \infty$. Therefore $d_{p}(\psi^{(u)}(\eta_{n}),
\psi^{(u)}(\eta)) \to 0$ as $n \to \infty$, and we conclude that
$\psi^{(u)}$ is continuous at $\eta$.\qed
\end{proof}

\subsection{Main theorem}
\label{SS:main}

Let $(X_n)_n$ be a strictly stationary sequence of random vectors,
jointly regularly varying with index $\alpha \in (0, 2)$ and tail
process $(Y_i)_{i \in \ZZ}$. The main theorem gives conditions
under which its partial sum process satisfies a nonstandard
functional limit theorem with a non-Gaussian $\alpha$--stable
L\'{e}vy process as a limit. Recall that the distribution of a
L\'{e}vy process $V(\,\cdot\,)$ is characterized by its
\emph{characteristic triple}, i.e.\ the characteristic triple of the
infinitely divisible distribution of $V(1)$. The characteristic
function of $V(1)$ and the characteristic triple $(A, \nu, b)$ are
related in the following way:
\[
  \E [e^{i \langle z,V(1) \rangle}] = \exp \biggl( -\frac{1}{2} \langle z, Az \rangle + i \langle b, z \rangle
  + \int_{\mathbb{R}^{d}} \bigl( e^{i \langle z, x \rangle}-1- i \langle z, x \rangle 1_{\{ \|x\|_{2} \leqslant 1 \}} \bigr)\,\nu(\rmd x) \biggr)
\]
for $z \in \mathbb{R}^{d}$, where $\langle x, y \rangle =
\sum_{i=1}^{d}x^{i}y^{i}$ and
$\|x\|_{2}=\sqrt{\sum_{i=1}^{d}(x^{i})^{2}}$ for
$x=(x^{1},\ldots,x^{d}),\,y=(y^{1},\ldots,y^{d}) \in
\mathbb{R}^{d}$. Here $A$ is a symmetric nonnegative-definite $d
\times d$ matrix, $\nu$ is a measure on $\mathbb{R}^{d}$ satisfying
\[
  \nu ( \{0\})=0 \qquad \text{and} \qquad \int_{\mathbb{R}^{d}}(\|x\|_{2}^{2} \wedge 1)\,\nu(\rmd x) < \infty,
\]
(that is, $\nu$ is a L\'{e}vy measure), and $b \in \mathbb{R}^{d}$.
For a textbook treatment of L\'{e}vy processes we refer to
Bertoin~\cite{Bertoin96} and Sato~\cite{Sato99}. The description of
the characteristic triple of the limit process will be in terms of the
measures $\nu^{(u)}$ ($u > 0$) on $\EE$ defined by
\begin{equation}
\label{E:nuu}
 \begin{array}{rl}
 \nu^{(u)}((x, y]) & = \displaystyle u^{-\alpha} \, \Pr \biggl( u \sum_{i \ge 0} \big( Y_i^{j} \, 1_{\{|Y_i^{j}| > 1\}} \big)_{j=1,\ldots,d} \in (x,y], \, \sup_{i \le -1} \|Y_i\| \le 1
  \biggr),
  \end{array}
\end{equation}
for $x=(x^{1},\ldots,x^{d}),\,y=(y^{1},\ldots,y^{d}) \in \EE$ such
that $(x,y]=(x^{1},y^{1}] \times \dots \times (x^{d},y^{d}]$ is
bounded away from zero.

Our main result considers the limiting behavior of the partial sum stochastic process
\begin{equation}
 \label{e:psp}
  V_{n}(t) =
  \sum_{k=1}^{\floor{nt}} \frac{X_{k}}{a_{n}} -
  \lfloor nt \rfloor \E \bigg( \bigg( \frac{X_{1}^{j}}{a_{n}} 1_{ \big\{ \frac{|X_{1}^{j}|}{a_{n}} \le 1 \big\} } \bigg)_{j=1,\ldots,d} \bigg), \quad t \in [0,1],
\end{equation}

It turns out that in the case $\alpha \in [1, 2)$, we need to assume  that the
contributions of the smaller increments to this partial sum process is
close to their expectation.

\begin{cond}
\label{c:step6cond} For all $\delta > 0$,
\[
  \lim_{u \downarrow 0} \limsup_{n \to \infty} \Pr \bigg[
  \max_{1 \le k \le n}  \bigg\| \sum_{i=1}^{k} \bigg( \frac{X_{i}^{j}}{a_{n}}
  1_{ \big\{ \frac{|X_{i}^{j}|}{a_{n}} \le u \big\} } -  \E \bigg( \frac{X_{i}^{j}}{a_{n}}
  1_{ \big\{ \frac{|X_{i}^{j}|}{a_{n}} \le u \big\} } \bigg) \bigg)_{j=1,\ldots,d} \bigg\| > \delta
  \bigg]=0.
\]
\end{cond}

\begin{theorem}
\label{t:2} Let $(X_{n})_{n \in \mathbb{N}}$ be a strictly
stationary sequence of random vectors, jointly regularly varying
with index $\alpha\in(0,2)$, and such that the conditions of Theorem~\ref{T:pointprocess:complete}
and Lemma~\ref{l:prob1} hold.
 If $1 \le \alpha < 2$,  suppose further that Condition~\ref{c:step6cond} holds.
Then
\[
  V_{n} \dto V, \qquad n \to \infty,
\]
in $D([0,1], \mathbb{R}^{d})$ endowed with the weak $M_{1}$
topology, where $V(\,\cdot\,)$ is an $\alpha$--stable L\'{e}vy
process.
\end{theorem}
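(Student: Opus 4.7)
The plan is to adapt the big-small jumps truncation scheme of Basrak et al.~\cite{BKS} to the multivariate, weak $M_1$ setting, using the continuous mapping framework that Lemmas~\ref{l:prob1} and \ref{L:contsf} have already set up. For fixed $0 < v < u < \infty$ I would consider the truncated partial sum process
\[
V_n^{(u)}(t) = \psi^{(u)}(N_n)(t) - \lfloor nt \rfloor\, c_n^{(u)},
\]
where the centering has coordinates $c_n^{(u),j} = \E\big[(X_1^j/a_n)\, 1_{\{u < |X_1^j|/a_n \le 1\}}\big]$ when $\alpha \in [1,2)$ and $c_n^{(u)} = 0$ when $\alpha \in (0,1)$. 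Theorem~\ref{T:pointprocess:complete} gives $N_n|_{[0,1]\times \EE_v} \dto N^{(v)}$; combining this with Lemmas~\ref{l:prob1} and \ref{L:contsf} via the continuous mapping theorem yields $\psi^{(u)}(N_n) \dto \psi^{(u)}(N^{(v)})$ in the weak $M_1$ topology, so $V_n^{(u)} \dto V^{(u)}$ after adding the deterministic drift.

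The first substantive step is to identify $V^{(u)}$ as an $\alpha$-stable L\'evy process. Using the description of $N^{(v)}$ as a Poisson superposition on $[0,1]$ with intensity $\theta v^{-\alpha}$ and cluster marks distributed as the weak limit in \eqref{E:clusterprocess}, a direct characteristic function computation identifies the characteristic triple of $V^{(u)}(1)$ in terms of the restriction of $\nu^{(u)}$ in \eqref{E:nuu} to $\{x : \max_j |x^j| > u\}$; the scaling property \eqref{e:scalingprop} together with independence across clusters produces the stability exponent.

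Next I would establish negligibility of the small-jump remainder in the weak $M_1$ sense. Since $d_p(x,y) \le \sup_{t}\|x(t)-y(t)\|$ coordinate-wise, it suffices to prove that for every $\delta>0$,
\[
\lim_{u \downarrow 0}\limsup_{n \to \infty} \Pr\!\left[ \sup_{0 \le t \le 1}\|V_n(t) - V_n^{(u)}(t)\| > \delta \right] = 0.
\]
For $\alpha \in [1,2)$ this is essentially Condition~\ref{c:step6cond}, after reconciling the coordinate-wise indicator truncation in that condition with the joint-norm truncation implicit in $\psi^{(u)}$ via a Karamata estimate on marginal moments. For $\alpha \in (0,1)$ it follows directly from $n\,\E[(|X_1^j|/a_n)\,1_{\{|X_1^j|/a_n \le u\}}] = O(u^{1-\alpha})$.

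The argument closes via a standard triangular convergence lemma (see e.g.\ Resnick~\cite{Resnick07}): the two ingredients above, combined with $V^{(u)} \dto V$ as $u \downarrow 0$ (which reduces to convergence of L\'evy triples, as $\nu^{(u)}$ increases to the L\'evy measure $\nu$ of $V$ and the drifts match), yield $V_n \dto V$ in the weak $M_1$ topology. The main obstacle I anticipate is the $\alpha \in [1,2)$ bookkeeping: the coordinate-wise truncation of $V_n$, the joint-norm truncation built into $\psi^{(u)}$ through $\EE_u$, and the centering vectors must all be aligned so that their discrepancies vanish as $u \downarrow 0$. The weak $M_1$ topology helps substantially here, since $d_p$ reduces everything to coordinate-by-coordinate $M_1$ convergence, which is essentially the one-dimensional situation already analyzed in~\cite{BKS}.
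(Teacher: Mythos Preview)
Your overall strategy is the paper's: apply the continuous mapping theorem through Lemmas~\ref{l:prob1} and~\ref{L:contsf} to get $V_n^{(u)} \dto V^{(u)}$, let $u \downarrow 0$ via the triangular lemma (Theorem~3.5 in Resnick~\cite{Resnick07}), and control the small-jump remainder by Condition~\ref{c:step6cond} for $\alpha \ge 1$ and by a Karamata estimate for $\alpha<1$.

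Three points need correcting. First, $V^{(u)}$ is \emph{not} $\alpha$-stable: it is compound Poisson plus drift, with L\'evy triple $(0,\nu^{(u)},b_u)$. The paper obtains $V^{(u)}\dto V$ by invoking Proposition~3.3 of Davis--Mikosch~\cite{DaMi98} for the one-dimensional marginal $V^{(u)}(1)$ and then Theorems~13.14 and~13.17 of Kallenberg~\cite{Kallenberg97} for process-level convergence; only $V$ is $\alpha$-stable. Second, your choice $c_n^{(u)}=0$ for $\alpha\in(0,1)$ breaks the negligibility step: the centering in $V_n$ itself does not vanish, since $\lfloor nt\rfloor\,\E\big[(X_1^j/a_n)\,1_{\{|X_1^j|/a_n\le 1\}}\big]\to t\int_{\{|x^j|\le 1\}}x^j\,\mu(\rmd x)$ by~\eqref{e:onedimregvar}, so $\sup_t\|V_n(t)-V_n^{(u)}(t)\|$ retains a nonzero deterministic drift independent of $u$. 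The paper avoids this by keeping the same centering $c_n^{(u),j}=\E\big[(X_1^j/a_n)\,1_{\{u<|X_1^j|/a_n\le 1\}}\big]$ for \emph{all} $\alpha$, which makes $V_n-V_n^{(u)}$ exactly the expression in~\eqref{e:slutskycond}. Third, there is no joint-norm truncation to reconcile: $\psi^{(u)}$ already truncates coordinate-wise through the indicator $1_{\{u<|x_i^j|<\infty\}}$, so it matches the truncation in $V_n$ and in Condition~\ref{c:step6cond} directly.
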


\begin{remark}
The condition about the tail sequence in Lemma~\ref{l:prob1} and Theorem~\ref{t:2} although restrictive, holds trivially
for all random vectors with nonnegative components. In such a case,  random variables
$(Y_{i}^{j})_{i
 \in \ZZ}$ are all nonnegative a.s. as well.  Therefore, they
 cannot have values with the opposite sign for any $j=1,\ldots,d$.
This means that the limit theorem applies
directly for many sequences appearing in applications, such as claim or file sizes in insurance or teletraffic modelling for instance.
\end{remark}

\begin{remark}\label{R:rhomix}
In general technical Condition~\ref{c:step6cond} is particularly difficult to check.
One  sufficient condition for
condition~\ref{c:step6cond} to hold can be given using
 the notion of
$\rho$-mixing. Recall that a strictly stationary sequence
$(Z_{i})_{i \in \mathbb{Z}}$ is \emph{$\rho$-mixing} if
$$  \rho_{n} = \sup \{ |\operatorname{corr} (U, V)| :  U \in
   L^{2}(\mathcal{F}_{-\infty}^{0}),\,V \in
   L^{2}(\mathcal{F}_{n}^{\infty}) \} \to 0 \quad \textrm{as} \ n \to 0.$$
Note that $\rho$-mixing implies strong mixing, whereas the converse
in general does not hold, see Bradley~\cite{Bradley05}.
Using a slight modification of the proof of Lemma 4.8 in Tyran-Kami\'{n}ska~\cite{Ty10} (see also Corollary~2.1 in Peligrad~\cite{Peligrad99}) one can show the following:
For  a strictly stationary sequence $(X_{n})_{n}$
of regularly varying random vectors with index $\alpha \in [1,2)$,
and a sequence $(a_{n})$ satisfying
$(\ref{e:niz})$, Condition~\ref{c:step6cond} holds if $(X_{n})_{n}$ is
$\rho$-mixing with
\[
  \sum_{j \ge 0} \rho_{2^{j}} < \infty.
\]
This further means that for an $m$--dependent sequence of random vectors,
 Condition~\ref{c:step6cond} always holds.
\end{remark}

\begin{remark}
The characteristic L\'{e}vy triple $(0, \nu, b)$ of the limiting process $V$ in the theorem is
given by the limits  in
\begin{align*}
  \nu^{(u)} &\vto \nu, &\ \  \bigg( \int_{x : u < \|x\| \le 1} x^{j} \, \nu^{(u)}(\rmd x) - \int_{x : u < |x^{j}| \le 1} x^{j} \, \mu(\rmd x) \bigg)_{j=1,\ldots,d} &\to b
\end{align*}
as $u \downarrow 0$, with $\nu^{(u)}$ as in \eqref{E:nuu} and $\mu$
as in \eqref{e:onedimregvar}.
\end{remark}

\begin{proof} [Theorem~\ref{t:2}]
Note that from Theorem~\ref{T:pointprocess:complete} and the fact
that $\|Y_{n}\| \to 0$ almost surely as $|n| \to \infty$, the random
vectors
\[
  u \sum_{j} \Big( Z_{ij}^{k}1_{\{ |Z_{ij}^{k}|>1 \}} \Big)_{k=1,\ldots,d}
\]
are i.i.d.\ and almost surely finite. Define
 $$ \widehat{N}^{(u)} = \sum_{i} \delta_{(T_{i}^{(u)},\,u\sum_{j} (Z_{ij}^{k}1_{\{ |Z_{ij}^{k}|>1
 \}})_{k=1,\ldots,d})}.$$
Then by Proposition~5.3 in Resnick~\cite{Resnick07},
$\widehat{N}^{(u)}$ is a Poisson process (or a Poisson random
measure) with mean measure  \begin{equation}\label{e:prodmeas}
  \theta u^{-\alpha} \lambda \times F^{(u)},
\end{equation}
where $\lambda$ is the Lebesgue measure and $F^{(u)}$ is the
distribution of the random vector $u \sum_{j} (Z_{1j}^{k}1_{\{
|Z_{1j}^{k}|>1 \}})_{k=1,\ldots,d}$. But for $0 \le s < t \le 1$ and
$x,y \in \EE$ such that $(x,y]$ is bounded away from zero, using the
fact that the distribution of $\sum_{j}\delta_{Z_{1j}}$ is equal to
the one of $\sum_{j}\delta_{Y_{j}}$ conditionally on the event $\{
\sup_{i \le -1}\|Y_{i}\| \le 1 \}$, after standard calculations we obtain
  \begin{equation*}
 \theta u^{-\alpha} \lambda \times F^{(u)} ([s,t] \times
(x,y])  =  \lambda \times \nu^{(u)}([s,t] \times (x, y]).
\end{equation*}
Thus the mean measure in \eqref{e:prodmeas} is equal to $\lambda
\times \nu^{(u)}$.

Consider now $0<v<u$ and
$$  \psi^{(u)} (N_{n}\,|\,_{[0,1] \times \EE_{u}}) (\,\cdot\,)
  = \psi^{(u)} (N_{n}\,|\,_{[0,1] \times \EE_{v}}) (\,\cdot\,)
  = \sum_{i/n \le \, \cdot} \Big( \frac{X_{i}^{k}}{a_{n}} 1_{ \big\{ \frac{|X_{i}^{k}|}{a_{n}} > u
    \big\} } \Big)_{k=1,\ldots,d},$$
which by Lemma~\ref{L:contsf} converges in distribution in $D[0,1]$
under the $WM_{1}$ topology to
$$
\psi^{(u)} (N^{(v)})(\,\cdot\,)
\eind \psi^{(u)} (N^{(v)}\,|\,_{[0,1] \times \EE_{u}})(\,\cdot\,).
$$
However, by the definition of the process $N^{(u)}$ in
Theorem~\ref{T:pointprocess:complete} it holds that
$$ N^{(u)} \eind
N^{(v)} \bigg|_{[0, 1] \times \EE_u}\,, $$ for every $v\in (0,u)$.
Therefore the last expression above is equal in distribution to
$$
\psi^{(u)} (N^{(u)})(\,\cdot\,)
 = \sum_{T_{i}^{(u)} \le \, \cdot}
   \sum_{j}u (Z_{ij}^{k}1_{ \{ |Z_{ij}^{k}| > 1 \} })_{k=1,\ldots,d}.
$$
 But since
  $\psi^{(u)}(N^{(u)}) = \psi^{(u)} (\widehat{N}^{(u)})\,\eind\,\psi^{(u)} (\widetilde{N}^{(u)})$,
 where
 $$ \widetilde{N}^{(u)} = \sum_{i} \delta_{(T_{i},\,K_{i}^{(u)})}
 $$
 is a Poisson process with mean measure $\lambda \times \nu^{(u)}$,
 we obtain
 $$ \sum_{i = 1}^{\lfloor n \, \cdot \, \rfloor} \Big( \frac{X_{i}^{k}}{a_{n}} 1_{ \big\{ \frac{|X_{i}^{k}|}{a_{n}} > u
    \big\} }\Big)_{k=1,\ldots,d} \dto \sum_{T_{i} \le \, \cdot} K_{i}^{(u)}, \quad \text{as} \ n \to \infty,$$
 in $D([0,1], \mathbb{R}^{d})$ under the $WM_{1}$ topology. From (\ref{e:onedimregvar}) we
 have, for any $t \in [0,1]$, as $n \to \infty$,
 \begin{eqnarray*}
   \lfloor nt \rfloor \E \bigg( \bigg( \frac{X_{1}^{k}}{a_{n}} \, 1_{ \big\{ u < \frac{|X_{1}^{k}|}{a_{n}} \le 1 \big\} }
    \bigg)_{k=1,\ldots,d} \bigg) & &\\[0.8em]
    & \hspace*{-30em}= & \hspace*{-14.3em} \frac{\lfloor nt \rfloor}{n} \bigg( \int_{\{x\,:\,u < |x^{k}| \le 1 \}}x^{k}
     n \Pr \bigg( \frac{X_{1}}{a_{n}} \in \rmd x \bigg)  \bigg)_{k=1,\ldots,d} \\[0.8em]
    & \hspace*{-30em} \to & \hspace*{-14.3em} t\,\bigg( \int_{\{x\,:\,u < |x^{k}| \le 1 \}}x^{k} \, \mu(\rmd x) \bigg)_{k=1,\ldots,d}.
 \end{eqnarray*}
 This convergence is uniform in $t$ and hence
 $$ \lfloor n \, \cdot \, \rfloor \E \bigg( \bigg( \frac{X_{1}^{k}}{a_{n}} 1_{ \big\{ u < \frac{|X_{1}^{k}|}{a_{n}} \le 1 \big\} }
    \bigg)_{k=1,\ldots,d} \bigg) \to (\,\cdot\,) \bigg( \int_{\{x\,:\,u < |x^{k}| \le 1 \}}x^{k} \, \mu(\rmd x) \bigg)_{k=1,\ldots,d}$$
 in $D([0,1], \mathbb{R}^{d})$.
Since the latter function is continuous, applying an analogue of
Corollary~12.7.1 in Whitt~\cite{Whitt02} but for the metric $d_{p}$, we obtain, as $n \to
\infty$,
\begin{multline}
\label{e:mainconv}
   V_{n}^{(u)}(\,\cdot\,) = \sum_{i = 1}^{\lfloor n \, \cdot \, \rfloor} \bigg( \frac{X_{i}^{k}}{a_{n}}
    1_{ \bigl\{ \frac{|X_{i}^{k}|}{a_{n}} > u \bigr\} } \bigg)_{k=1,\ldots,d} - \lfloor n \,\cdot \, \rfloor
    \E \biggl( \bigg( \frac{X_{1}^{k}}{a_{n}}
    1_{ \bigl\{ u < \frac{|X_{1}^{k}|}{a_{n}} \le 1 \bigr\} }
    \bigg)_{k=1,\ldots,d} \biggr) \\[0.8em]
    \dto  V^{(u)}(\,\cdot\,) := \sum_{T_{i} \le \, \cdot}
   K_{i}^{(u)} - (\,\cdot\,) \bigg( \int_{\{x\,:\,u < |x^{k}| \le 1 \}}x^{k} \, \mu(\rmd x) \bigg)_{k=1,\ldots,d}.
\end{multline}
The limit (\ref{e:mainconv}) can be rewritten as
\begin{multline*}
  \sum_{T_{i} \le \,\cdot}
  K_{i}^{(u)} - (\,\cdot\,) \bigg( \int_{\{x\,:\,u < \|x\| \le 1 \}}x^{k} \, \nu^{(u)}(\rmd x) \bigg)_{k=1,\ldots,d} \\
  + (\,\cdot\,) \biggl( \int_{\{x\,:\,u < \|x\| \le 1 \}}x^{k} \, \nu^{(u)}(\rmd x)
  -  \int_{\{x\,:\,u < |x^{k}| \le 1 \}}x^{k} \, \mu(\rmd x)\biggr)_{k=1,\ldots,d}.
\end{multline*}
Note that the first two terms, since $\nu^{(u)}(\{ x : \|x\|
\leqslant u \})=0$, represent a L\'{e}vy--Ito representation of the
L\'{e}vy process with characteristic triple $(0, \nu^{(u)}, 0)$, see
Resnick~\cite[p.\ 150]{Resnick07}. The remaining term is just a
linear function of the form $t \mapsto t \, b_{u}$. As a
consequence, the process $V^{(u)}$ is a L\'{e}vy process for each
$u<1$, with characteristic triple $(0, \nu^{(u)}, b_{u})$, where
$$ b_{u} = \bigg( \int_{\{x\,:\,u < \|x\| \le 1 \}}x^{k} \,
    \nu^{(u)}(\rmd x) - \int_{\{x\,:\,u < |x^{k}| \le 1 \}}x^{k} \,
    \mu(\rmd x) \bigg)_{k=1,\ldots,d}.$$

By Proposition~3.3 in Davis and Mikosch~\cite{DaMi98}, for $t=1$,
$V^{(u)}(1)$ converges to an $\alpha$--stable random vector. Hence by Theorem~13.17
in Kallenberg~\cite{Kallenberg97}, there is a L\'{e}vy process
$V(\,\cdot\,)$ such that, as $u \to 0$,
$$ V^{(u)}(\,\cdot\,) \dto V(\,\cdot\,)$$
in $D([0,1], \mathbb{R}^{d})$ with the $WM_{1}$ topology. It has
characteristic triple $(0, \nu, b)$, where $\nu$ is the vague limit
of $\nu^{(u)}$ as $u \to 0$ and $b=\lim_{u \to 0}b_{u}$, see
Theorem~13.14 in~\cite{Kallenberg97}. Since the random vector $V(1)$
has an $\alpha$--stable distribution, it follows that the process
$V(\,\cdot\,)$ is $\alpha$--stable.

If we show that
$$ \lim_{u \downarrow 0} \limsup_{n \to \infty}
   \Pr[d_{p}(V_{n}^{(u)}, V_{n}) > \delta]=0$$
 for any $\delta>0$, then by Theorem~3.5 in Resnick~\cite{Resnick07} we
 will have, as $n \to \infty$,
 $$ V_{n} \dto V$$
 in $D([0,1], \mathbb{R}^{d})$ with the $WM_{1}$ topology. Since the
 metric $d_{p}$ on $D([0,1], \mathbb{R}^{d})$ is bounded above by the uniform metric on
 $D([0,1], \mathbb{R}^{d})$ (see Theorem 12.10.3 in Whitt~\cite{Whitt02}), it suffices to show that
 $$ \lim_{u \downarrow 0} \limsup_{n \to \infty} \Pr \biggl(
 \sup_{0 \le t \le 1} \|V_{n}^{(u)}(t) - V_{n}(t)\| >
 \delta \biggr)=0.$$
 Recalling the definitions, we have
 \begin{equation*}
   \begin{split}
    \lim_{u \downarrow 0} & \limsup_{n \to \infty} \Pr \bigg(
     \sup_{0 \le t \le 1} \|V_{n}^{(u)}(t) - V_{n}(t)\| >  \delta \bigg) \\
    & = \lim_{u \downarrow 0} \limsup_{n \to \infty} \Pr \bigg[
       \max_{1 \le k \le n}  \bigg\| \sum_{i=1}^{k} \bigg( \frac{X_{i}^{j}}{a_{n}}
       1_{ \big\{ \frac{|X_{i}^{j}|}{a_{n}} \le u \big\} } -  \E \bigg( \frac{X_{i}^{j}}{a_{n}}
       1_{ \big\{ \frac{|X_{i}^{j}|}{a_{n}} \le u \big\} } \bigg) \bigg)_{j=1,\ldots,d} \bigg\| > \delta
       \bigg].
  \end{split}
 \end{equation*}
 Therefore we have to show
 \begin{equation}\label{e:slutskycond}
     \lim_{u \downarrow 0} \limsup_{n \to \infty} \Pr \bigg[
       \max_{1 \le k \le n}  \bigg\| \sum_{i=1}^{k} \bigg( \frac{X_{i}^{j}}{a_{n}}
       1_{ \big\{ \frac{|X_{i}^{j}|}{a_{n}} \le u \big\} } -  \E \bigg( \frac{X_{i}^{j}}{a_{n}}
       1_{ \big\{ \frac{|X_{i}^{j}|}{a_{n}} \le u \big\} } \bigg) \bigg)_{j=1,\ldots,d} \bigg\| > \delta
       \bigg]=0.
 \end{equation}
For $\alpha \in [1,2)$ this relation is simply
Condition~\ref{c:step6cond}. Therefore it remains to show
(\ref{e:slutskycond}) for the case when $\alpha \in (0,1)$. Hence
assume $\alpha \in (0,1)$. For an arbitrary (and fixed)
 $\delta >0$ define
$$ I(u,n) = \Pr \bigg[
       \max_{1 \le k \le n}  \bigg\| \sum_{i=1}^{k} \bigg( \frac{X_{i}^{j}}{a_{n}} \,
       1_{ \big\{ \frac{|X_{i}^{j}|}{a_{n}} \le u \big\} } -  \E \bigg( \frac{X_{i}^{j}}{a_{n}} \,
       1_{ \big\{ \frac{|X_{i}^{j}|}{a_{n}} \le u \big\} } \bigg) \bigg)_{j=1,\ldots,d} \bigg\| > \delta
       \bigg].$$
Using stationarity, Chebyshev's inequality and the fact that
$|x^{j}| \leqslant \|(x^{1},\ldots,x^{d})\| \leqslant
\sum_{j=1}^{d}|x^{j}|$ we get the bound
 \begin{eqnarray}\label{e:alpha01}
   \nonumber I(u,\,n) & \le & \Pr \bigg[ \sum_{i=1}^{n} \bigg\| \bigg( \frac{X_{i}^{j}}{a_{n}} \,
       1_{ \big\{ \frac{|X_{i}^{j}|}{a_{n}} \le u \big\} } -  \E \bigg( \frac{X_{i}^{j}}{a_{n}} \,
       1_{ \big\{ \frac{|X_{i}^{j}|}{a_{n}} \le u \big\} } \bigg) \bigg)_{j=1,\ldots,d} \bigg\| > \delta
       \bigg]\\
    \nonumber & = & \delta^{-1} n \E \bigg[ \bigg\| \bigg( \frac{X_{1}^{j}}{a_{n}} \,
       1_{ \big\{ \frac{|X_{1}^{j}|}{a_{n}} \le u \big\} } -  \E \bigg( \frac{X_{1}^{j}}{a_{n}} \,
       1_{ \big\{ \frac{|X_{1}^{j}|}{a_{n}} \le u \big\} } \bigg) \bigg)_{j=1,\ldots,d} \bigg\|
       \bigg]\\
    \nonumber & \le &  2 \delta^{-1} n \sum_{j=1}^{d} \E \bigg( \frac{|X_{1}^{j}|}{a_{n}} \, 1_{ \big\{ \frac{|X_{1}^{j}|}{a_{n}}
            \le u \big\} } \bigg)\\
    \nonumber & = &  \frac{2n}{\delta} \sum_{j=1}^{d}
             \bigg[ \E \bigg( \frac{|X_{1}^{j}|}{a_{n}} \, 1_{ \big\{ \frac{|X_{1}^{j}|}{a_{n}}
            \le u, \frac{\|X_{1}\|}{a_{n}} >u \big\} } \bigg) +
             \E \bigg( \frac{|X_{1}^{j}|}{a_{n}} \, 1_{ \big\{ \frac{|X_{1}^{j}|}{a_{n}}
            \le u, \frac{\|X_{1}\|}{a_{n}} \leqslant u \big\} } \bigg) \bigg]\\
    \nonumber & \le & \frac{2n}{\delta} \sum_{j=1}^{d} \bigg[ u \Pr
    \bigg( \frac{\|X_{1}\|}{a_{n}} >u \bigg) + \E \bigg( \frac{\|X_{1}\|}{a_{n}} \, 1_{ \frac{\|X_{1}\|}{a_{n}} \leqslant u \big\} } \bigg) \bigg]\\
    \nonumber & = & \frac{2du}{\delta} \cdot n \Pr (\|X_{1}\|>a_{n}) \cdot
      \frac{\Pr(\|X_{1}\|>ua_{n})}{\Pr(\|X_{1}\|>a_{n})}
            \cdot \bigg[ 1 + \frac{\E(\|X_{1}\| \, 1_{ \{ \|X_{1}\| \le u a_{n} \}
            })}{ua_{n}\Pr(\|X_{1}\|>ua_{n})} \bigg].\\
    & &
 \end{eqnarray}
Since $X_{1}$ is a regularly varying random variable with index
$\alpha$, it follows immediately that
$$ \frac{\Pr(\|X_{1}\|>ua_{n})}{\Pr(\|X_{1}\|>a_{n})} \to
    u^{-\alpha},$$
 as $n \to \infty$. By Karamata's theorem
 $$ \lim_{n \to \infty} \frac{\E(\|X_{1}\| \, 1_{ \{ \|X_{1}\| \le u a_{n} \}
            })}{ua_{n}\Pr(\|X_{1}\|>ua_{n})} =
            \frac{\alpha}{1-\alpha}.$$
 Thus from (\ref{e:alpha01}), taking into account
 relation (\ref{e:niz}), we get
 $$ \limsup_{n \to \infty} I(u,\,n) \le 2d \delta^{-1} u^{1-\alpha}
 \Big(1+\frac{\alpha}{1-\alpha}\Big).$$
 Letting $u \to 0$, since $1-\alpha >0$, we finally obtain
 $$ \lim_{u \downarrow 0} \limsup_{n \to \infty} I(u,\,n)=0,$$
 and relation (\ref{e:slutskycond}) holds.
 Therefore $V_{n} \dto V$ as $n \to \infty$ in
 $D([0,1], \mathbb{R}^{d})$ endowed with the weak $M_{1}$ topology.
 \qed
\end{proof}

\section{Two examples}
\label{S:examples}

\begin{example} (A $q$-dependent process).
Consider an i.i.d. sequence $(Z_t)_{t \in \mathbb{Z}}$ of regularly varying random variables with index $\alpha \in (0,2)$, and construct a lagged process
$$
X_t^{(q)} = (Z_t,\ldots, Z_{t-q}),\ {t \in \mathbb{Z}}\,,
$$
for some fixed $q \in \mathbb{N}$. Take a sequence of positive real numbers $(a_{n})$ such that
$$ n \Pr (|Z_{1}|>a_{n}) \to 1 \qquad \textrm{as} \ n \to \infty.$$ By an application of Proposition 5.1 in Basrak et al.~\cite{BDM02b} it can be seen that the random process $(X_{t})$ is jointly regularly varying. Since the sequence $(X_{t}^{(q)})$ is $q$--dependent, it is also strongly mixing, and therefore
Condition~\ref{c:mixcond} holds for any  positive integer sequence
$(r_{n})_{n \in \mathbb{N}}$ such that $r_{n} \to \infty $ and
$r_{n} / n \to 0$ as $n \to \infty$.
By the same property using Remark~\ref{R:rhomix}
one can easily see that Conditions~\ref{c:anticluster} and ~\ref{c:step6cond} hold.

It is not much more difficult to
 check the condition on the tail process $(Y_{i})_{i \in \mathbb{Z}}$ given in the statement  of Theorem~\ref{t:2}. Fix $j \in \{1,\ldots,q+1\}$, $k,l \in \mathbb{Z},\,k \neq l$, and arbitrary $r>0$. From relation (\ref{e:tailprocess}), using a standard regular variation argument, we obtain
\begin{eqnarray*}
  \Pr(Y_{k}^{j}>r, Y_{l}^{j}<-r) &=& \lim_{n \to \infty} \Pr \bigg( \frac{X_{k}^{(q)\,j}}{a_{n}}>r, \frac{X_{l}^{(q)\,j}}{a_{n}}<-r \, \bigg| \, \|X_{0}\|>a_{n}\bigg) \\[0.3em]
   &=& \lim_{n \to \infty} \Pr \bigg( \frac{Z_{k+1-j}}{a_{n}}>r, \frac{Z_{l+1-j}}{a_{n}}<-r \, \bigg| \, \|X_{0}\|>a_{n}\bigg)\\[0.3em]
   & \leqslant & \liminf_{n \to \infty} \frac{\Pr(Z_{k+1-j}>ra_{n}, Z_{l+1-j}<-ra_{n})}{\Pr(\|X_{0}\|>a_{n})}\\[0.3em]
   &=& \liminf_{n \to \infty} \frac{n \Pr(Z_{k+1-j}>ra_{n}) \Pr( Z_{l+1-j}<-ra_{n})}{n \Pr(\|X_{0}\|>a_{n})}\\[0.3em]
   &=& 0.
\end{eqnarray*}
Since $r>0$ was arbitrary, it holds that $\Pr(Y_{k}^{j}>0, Y_{l}^{j}<0) =0$, i.e. $(Y_{i}^{j})_{i \in \mathbb{Z}}$ almost surely has no two values of the opposite sign.

Thus, $(X_{t}^{(q)})$ satisfies all the conditions of Theorem~\ref{t:2}, and the corresponding partial sum processes $V_{n}(\,\cdot\,)$ converge in distribution to an $\alpha$--stable L\'{e}vy process $V(\,\cdot\,)$ under the weak $M_{1}$ topology.

Since the sequence $(X_{t}^{(q)\,j})_{t \in \mathbb{Z}}=(Z_{t+1-j})_{t\in \mathbb{Z}}$ consists of i.i.d. random variables, by the univariate functional limit theorem (see Basrak et al.~\cite{BKS}), for every $j=1,\ldots, q+1$, the univariate partial sum processes
$$ V_{n}^{j}(t) = \sum_{k=1}^{\floor{nt}} \frac{Z_{k+1-j}}{a_{n}} -
  \lfloor nt \rfloor \E  \bigg( \frac{Z_{1}}{a_{n}} 1_{ \big\{ \frac{|Z_{1}|}{a_{n}} \le 1 \big\} } \bigg), \quad t \in [0,1],$$
  converge in distribution as $n \to \infty$, in $D[0,1]$ under the $M_{1}$ topology, to an $\alpha$--stable L\'{e}vy process with characteristic triple $(0,\mu,0)$ where the measure $\mu$ is the vague limit of $n \Pr(Z_{1}/a_{n} \in \cdot)$ as $n \to \infty$. The last convergence holds also in the $J_{1}$ sense. 

Next we show that $V_{n}(\,\cdot\,)$ does not converge in distribution under the standard (or strong) $M_{1}$ topology on $D([0,1], \mathbb{R}^{q+1})$. For simplicity take $q=1$. Then $X_{t}^{(1)}=(Z_{t}, Z_{t-1})$ and $V_{n}(t)=(V_{n}^{1}(t), V_{n}^{2}(t))$ where
$$ V_{n}^{1}(t) = \sum_{k=1}^{\floor{nt}} \frac{Z_{k}}{a_{n}} -
  \lfloor nt \rfloor \E  \bigg( \frac{Z_{1}}{a_{n}} 1_{ \big\{ \frac{|Z_{1}|}{a_{n}} \le 1 \big\} } \bigg) $$
and
$$ V_{n}^{2}(t) = \sum_{k=1}^{\floor{nt}} \frac{Z_{k-1}}{a_{n}} -
  \lfloor nt \rfloor \E  \bigg( \frac{Z_{1}}{a_{n}} 1_{ \big\{ \frac{|Z_{1}|}{a_{n}} \le 1 \big\} } \bigg).$$
Observe
$$ (V_{n}^{1} - V_{n}^{2})(\,\cdot\,)= \sum_{k=1}^{\floor{n \, \cdot}} \frac{Z_{k}-Z_{k-1}}{a_{n}} = \frac{Z_{\floor{n \,\cdot}}-Z_{0}}{a_{n}} \fidi 0,$$
as $n \to \infty$, but this convergence can not be replaced by the convergence in the $M_{1}$ topology on $D([0,1], \mathbb{R})$, since as it is known, $\sup_{t\in[0,1]} Z_{\floor{nt}}/a_{n}$ converges in distribution to a nonzero limit, and $\sup_{t\in[0,1]}$ is a continuous functional in the $M_{1}$ topology. Therefore $V_{n}^{1}-V_{n}^{2}$ does not converge in distribution in $D([0,1], \mathbb{R})$ endowed with the $M_{1}$ topology.

However, if $V_{n}$ would converge in distribution to some $V$ in the standard $M_{1}$ topology, then using the fact that linear combinations of the coordinates are continuous in the same topology (see Theorem 12.7.1 in Whitt~\cite{Whitt02}) and the continuous mapping theorem, we would obtain that $V_{n}^{1}-V_{n}^{2}$ converges to $V^{1}-V^{2}$ in $D([0,1], \mathbb{R})$ endowed with the $M_{1}$ topology, which is impossible.

Our example shows that the standard $M_{1}$ convergence in the multivariate functional limit theorem excludes some very basic models. The difference with the weak $M_{1}$ convergence in this example
can be explained by different behavior of linear functions of the coordinates in the two topologies: they are continuous in the standard $M_{1}$ topology, but not in the weak $M_{1}$ topology.
One can also show that Lemma~\ref{L:contsf} does not hold if the weak $M_{1}$ topology is replaced by the standard $M_{1}$ topology.
\end{example}

\begin{example} (Stochastic recurrence equation).
Another standard class of processes satisfying our main theorem is a class of
multivariate
 stationary solutions to stochastic recurrence equations.
Here, we suppose that a $d$--dimensional random process $(X_{t})$ satisfies a stochastic recurrence equation
 \begin{equation}\label{e:SRE}
 X_{t}=A_{t}X_{t-1} + B_{t}, \qquad t \in \mathbb{Z},
 \end{equation}
 for some i.i.d. sequence $((A_{t},B_{t}))$ of random $d \times d$ matrices $A_{t}$ and $d$--dimensional vectors $B_{t}$. One can view $(X_{t})$ as a multivariate random coefficient AR(1) processes.
 For simplicity, we assume that components of $(X_t),(A_t)$ and $(B_t)$ are all nonnegative.
 For instance, the process of conditional factor variances of a factor GARCH model considered in Hafner and Preminger~\cite{HaPr09} satisfies (\ref{e:SRE}) (cf. also Basrak and Segers~\cite{BaSe}).
It is known by the work of Kesten~\cite{Ke73}, see also
  Basrak et al.~\cite{BDM02b}, Theorem 2.4, that under relatively general conditions there exists a stationary causal solution $(X_{t})$ to the stochastic recurrence equation (\ref{e:SRE}) which satisfies the multivariate regular variation condition.
 It is known further (cf.~\cite{BDM02b}) that such a process $(X_{t})$ is jointly regularly varying and satisfies Condition~\ref{c:anticluster}. If we assume that the process $(X_{t})$ is $\mu$--irreducible, then according to  Theorem 16.1.5 in Meyn and Tweedie~\cite{MeTw93}, it is also strongly mixing with geometric rate
 (cf. Theorem 2.8 in Basrak et al.~\cite{BDM02b}). Consider now time series of this form  for which the index of regular variation $\alpha \in (0,2)$.
   Since the components of $X_{t}$ are assumed to be nonnegative, it trivially holds that the tail process $(Y_{i})$ of $(X_{t})$ satisfies the condition that $(Y_{i}^{j})_{i}$ has no two values of the opposite sign for every $j=1,\ldots,d$. Therefore, if additionally Condition~\ref{c:step6cond} holds when $\alpha \in [1,2)$, then by Theorem~\ref{t:2} the partial sum stochastic process
  $$ V_{n}(t) =
  \sum_{k=1}^{\floor{nt}} \frac{X_{k}}{a_{n}} -
  \lfloor nt \rfloor \E \bigg( \bigg( \frac{X_{1}^{j}}{a_{n}} 1_{ \big\{ \frac{|X_{1}^{j}|}{a_{n}} \le 1 \big\} } \bigg)_{j=1,\ldots,d} \bigg), \quad t \in [0,1], $$
converges in $D([0,1], \mathbb{R}^{d}) $ with the weak $M_{1}$ topology to an
$\alpha$--stable L\'{e}vy process $V(\,\cdot\,)$.
\end{example}


%
%

\section*{Acknowledgements}
Bojan Basrak's research was partially supported by the research grant MZOS nr.
037-0372790-2800 of the Croatian government.

\end{document}